\theoremstyle{definition}
\newtheorem{defi}{Definition}
\newtheorem{theo}[defi]{Theorem}
\newtheorem{lemm}[defi]{Lemma}
\newtheorem{prop}[defi]{Proposition}
\newtheorem{rem}[defi]{Remark}
\def\det{{\rm det}}
\def\SL{{\rm SL}}
\def\SU{{\rm SU}}
\def\End{{\rm End}}
\def\Exp{{\rm Exp}}
\def\Exp{{\rm Exp}}
\def\det{{\rm det}}
\def\Tr{{\rm Tr}}
\def\diag{{\rm diag}}
\def\Herm{{\rm Herm}}
\def\deg{{\rm deg}}
\def\R{{\mathbb R}}
\def\Z{{\mathbb Z}}
\def\C{{\mathbb C}}
\def\Q{{\mathbb Q}}
\def\g{\mathfrak{g}}
\def\t{\mathfrak{t}}
\def\h{\mathfrak{h}}
\def\inum{{\sqrt{-1}}}
\begin{document}
\title {On diagonal pluriharmonic metrics of $G$-Higgs bundles}
\author {Natsuo Miyatake}
\date{}
\maketitle
\begin{abstract} Let $(E,\Phi)\rightarrow (X,\omega_X)$ be a Higgs bundle over a compact K\"ahler manifold. We suppose that the holomorphic vector bundle $E$ decomposes into a direct sum of holomorphic line bundles. In this paper, we give the necessary and sufficient condition for the existence of a diagonal metric which is a solution to the Hermitian-Einstein equation. Our theorem can easily be generalized to $G$-Higgs bundles. We also describe the relationship between the stability condition and our condition using the torus action on the space of Higgs fields.
\end{abstract}

\section{Main theorem and proof}
Let $X$ be a compact connected Riemann surface of genus $g(X)\geq 2$ with the canonical bundle $K_X\rightarrow X$. We define a holomorphic vector bundle $E$ as the direct sum of holomorphic line bundles $L_1,\dots,L_r$ over $X$ with $\deg(L_1)+\cdots+\deg(L_r)=0$. Let $\Phi_j $ be a holomorphic section of $(L_j^{-1}L_{j+1})\otimes K_X$ for each $\ j=1,\dots, r-1$, and $\Phi_r$ a holomorphic section of $(L_r^{-1}L_1)\otimes K_X$. We define $\Phi$ as the sum of all the $\Phi_j$, $j=1,\dots,r$, and it belongs to the space $H^0(\End E\otimes K_X)$.
The pair $(E,\Phi)$ is called a cyclic Higgs bundle \cite{Bar1, DL1}. Suppose that $\Phi_j\neq 0$ for all $j=1,\dots, r$. Then $(E,\Phi)$ is stable, and thus, there uniquely exists a harmonic metric $h$ on $(E,\Phi)$. The harmonic metric $h$ splits as $h=(h_1,\dots, h_r)$ concerning to the decomposition since the Higgs field $\Phi$ is an eigensection of the gauge transformation $g=\diag(1,\omega,\dots, \omega^{r-1}) \ (\omega=e^{2\pi\inum/r})$ \cite{Bar1}. 
This property of the harmonic metric makes it possible to use techniques of maximum principles when we investigate it (see \cite{DL1}). The present paper aims to establish the following two results regarding Higgs bundles over compact K\"ahler manifolds: First, we provide a necessary and sufficient condition for the existence of a diagonal metric that solves the Hermitian-Einstein equation (see Theorem \ref{main theorem 1}). Second, we establish the relationship between the condition which we give in Theorem \ref{main theorem 1}  and the usual stability condition of Higgs bundles (see Proposition \ref{main theorem 3} and Proposition \ref{main theorem 4}). The precise statement of Theorem \ref{main theorem 1} is as follows: Let $(E,\Phi)\rightarrow (X,\omega_X)$ be a Higgs bundle over a compact K\"ahler manifold. Suppose that the holomorphic vector bundle $E$ decomposes as $E=L_1\oplus\cdots\oplus L_r$ with holomorphic line bundles $L_1,\dots, L_r\rightarrow X$. Then the Higgs field $\Phi$ decomposes as $\Phi=\Phi_0+\sum_{i,j=1,\dots, r}\Phi_{i,j}$, where $\Phi_{i,j}$ is a holomorphic $(1,0)$-form with values in $L_j^{-1}{L_i}$ and $\Phi_0$ is the diagonal part. For each $j=1,\dots, r$, let $\gamma_j$ be the degree of the holomorphic line bundle $L_j$ with respect to the K\"ahler form $\omega_X$. We assume that $\deg_{\omega_X} (E)=\gamma_1+\cdots+\gamma_r=0$ for simplicity.  Let $V$ be a real vector space defined as $V\coloneqq \{(x_1,\dots, x_r)\in\R^r\mid x_1+\cdots +x_r=0\}$. For each $i,j=1,\dots, r$, let $v_{i,j}\in V$ be a vector defined as $v_{i,j}\coloneqq u_i-u_j$, where $u_1,\dots, u_r$ is the canonical basis of $\R^r$. We define a vector $\gamma\in V$ as $\gamma\coloneqq (\gamma_1,\dots, \gamma_r)$. Then the following holds:
\begin{theo}\label{main theorem 1} {\it The Higgs bundle $(E,\Phi)$ admits a solution to the Hermitian-Einstein equation which is diagonal concerning the decomposition $E=L_1\oplus\cdots\oplus L_r$ if and only if the following (i) and (ii) hold:
\begin{enumerate}[(i)]
\item The off-diagonal part of $\Lambda[\Phi\wedge\Phi^{\ast h}]$ vanishes for a diagonal metric $h=(h_1,\dots, h_r)$, where $\Lambda$ denotes the adjoint of $\omega_X\wedge$. \label{off-diagonal vanishes}
\item The following holds:
\begin{align}
-\gamma\in\sum_{i,j=1,\dots, r, \Phi_{i,j}\neq0}\R_{>0}v_{i,j}.\label{gamma}
\end{align}
\end{enumerate}
In particular, a Higgs bundle satisfying (i) and (ii) is polystable.}
\end{theo}
\begin{rem} From \cite[Proposition 3.4]{Sim1}, if the above condition (i) and (ii) hold and in addition, $c_2(E)=0$, then the solution of the Hermitian-Einstein equation is a diagonal pluriharmonic metric. The corresponding pluriharmonic map from the universal covering space $\widetilde{X}$ to $\SL(r,\C)/\SU(r)$ naturally lifts to the map to $\SL(r,\C)/T$, where $T$ is the maximal torus of $\SU(r)$ which consists of all diagonal matrices of $\SU(r)$ (see \cite{Bar1, Bar2}).
\end{rem}
\begin{rem} Theorem \ref{main theorem 1} can easily be generalized to $G$-Higgs bundles. In particular, the following holds: let $G$ be a complex simple Lie group with Lie algebra $\g$. Let $H\subseteq G$ be a maximal torus with Lie algebra $\h$. Let $\alpha_1,\dots, \alpha_l$ be a base of the root space $\Delta\subseteq \h^\ast$ and $\delta$ the highest root. Let $\g=\h\oplus\bigoplus_{\alpha\in\Delta}\g_\alpha$ be the root space decomposition. Consider a $G$-Higgs bundle $(P_G,\Phi)\rightarrow (X,\omega_X)$ with holomorphic $G$-bundle $P_G$ which admits a reduction $P_H\subseteq P_G$ to a holomorphic $H$-subbundle $P_H$. Suppose that the Higgs field $\Phi$ is of the following form:
\begin{align*}
\Phi=\sum_{\alpha\in\{-\alpha_1,\dots, -\alpha_l,\delta\}}\Phi_\alpha.
\end{align*}
We call $(P_G,\Phi)$ a $G$-cyclic Higgs bundle over $(X,\omega_X)$. For a $G$-cyclic Higgs bundle, 
if $\Phi_{\alpha}\neq0$ for all $\alpha\in\{-\alpha_1,\dots, -\alpha_l, \delta\}$, then there uniquely exists a diagonal solution to the Hermitian-Einstein equation of the $G$-cyclic Higgs bundle $(P_G,\Phi)$. This can be verified by the following observation: For each $\alpha\in\{-\alpha_1,\dots, -\alpha_l,\delta\}$, let $h_\alpha$ be the coroot of $\alpha$. Then we can observe that
\begin{align*}
\sum_{\alpha\in\{-\alpha_1,\dots, -\alpha_l,\delta\}}\R_{>0}h_\alpha=\inum\t,
\end{align*}
where $\t$ is the Lie algebra of the maximal compact torus $T\subseteq H$. Therefore, no matter which direction the vector $\gamma$ points in, we see that condition (\ref{gamma}) holds for the $G$-cyclic Higgs bundle. This implies the claim (see also \cite[Section 2]{Miy1}).
\end{rem}
\begin{rem} We prove Theorem \ref{main theorem 1} by applying \cite[Theorem 1]{Miy1}. However, Theorem \ref{main theorem 1} can also be derived by showing the stability of quiver bundles \cite{AG1} coincides with condition (\ref{gamma}) in the setting we are considering. We also note that the same condition as (\ref{gamma}) is obtained in \cite{Bap1} for a slightly different PDE.
\end{rem}
We prove Theorem \ref{main theorem 1}. We can easily check that condition (\ref{off-diagonal vanishes}) is a necessary condition for the existence of a diagonal metric which solves the Hermitian-Einstein equation. Suppose that (\ref{off-diagonal vanishes}) holds. We fix a diagonal metric $h=(h_1,\dots h_r)$ such that $\det(h)=1$. Let $\xi\coloneqq (f_1,\dots, f_r)\ (f_1+\cdots+f_r=0)$ be a pair of $\R$-valued functions. Then the Hermitian-Einstein equation for  a metric $(e^{f_1}h_1,\dots, e^{f_r}h_r)$ is the following:
\begin{align}
\Delta_{\omega_X} \xi+\sum_{i,j=1,\dots, r}4|\Phi_{i,j}|_{h, \omega_X}^2e^{(v_{i,j},\xi)}v_{i,j}=-2\inum \Lambda F_h, \label{KW}
\end{align}
where we denote by $F_h$ the curvature of the Chern connection of $h$ and by $\Delta_{\omega_X}$ the geometric Laplacian. We apply \cite[Theorem 1]{Miy1} to equation (\ref{KW}).
In order to apply \cite[Theorem 1]{Miy1}, we must check that for each $i,j=1,\dots, r$, if $\Phi_{i,j}$ is not a zero section, then $\log|\Phi_{i,j}|_{h, \omega_X}^2$ is integrable. This follows from the following lemma, which is an immediate consequence of the fact that plurisubharmonic functions are locally integrable:
\begin{lemm}\label{L1loc}{\it Let $U\subseteq \C^n$ be a domain and $f:U\rightarrow V$ a holomorphic section of a trivial bundle $V=U\times\C^r\rightarrow U$. Then for any smooth Hermitian metric $h_V$ on $V$, $\log|f|_{h_V}\in L^1_{loc}(U)$.
}
\end{lemm}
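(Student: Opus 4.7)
The plan is to reduce to the standard Euclidean norm on $\C^r$ and then invoke the classical fact that the logarithm of the modulus of a non-trivial holomorphic function is locally integrable. I am implicitly assuming $f\not\equiv 0$, since otherwise $\log|f|_{h_V}\equiv -\infty$ and the claim fails; this is the case of interest in the application to $\Phi_\alpha$.

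First I would exploit the smoothness and positive-definiteness of $h_V$. Writing $f=(f_1,\dots,f_r)$ in the standard trivialization, $h_V$ corresponds to a smooth Hermitian positive-definite matrix function $(h_{ij})$ on $U$. On any compact $K\subset U$, its eigenvalues are bounded away from $0$ and $+\infty$, so there exist constants $0<c_K\le C_K<\infty$ with
\begin{align*}
c_K^2\sum_{i=1}^r|f_i|^2\le |f|_{h_V}^2\le C_K^2\sum_{i=1}^r|f_i|^2 \quad\text{on } K.
\end{align*}
Thus $\log|f|_{h_V}$ differs from $\tfrac12\log\bigl(\sum_i|f_i|^2\bigr)$ by a bounded function on $K$, and it suffices to show $\log\bigl(\sum_i|f_i|^2\bigr)\in L^1_{loc}(U)$.

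Next I would argue that $u\coloneqq\log\bigl(\sum_i|f_i|^2\bigr)$ is plurisubharmonic on $U$ (standard: each $|f_i|^2$ is plurisubharmonic, and $\log$ of a sum of squared moduli of holomorphic functions is plurisubharmonic by a short $\partial\bar{\partial}$ computation) and, since $f\not\equiv 0$, is not identically $-\infty$. The conclusion then follows from the classical fact that a plurisubharmonic function on a domain in $\C^n$ which is not $\equiv -\infty$ lies in $L^1_{loc}$.

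I do not expect a serious obstacle; the argument is a short linear-algebra reduction combined with two standard facts from pluripotential theory. If one prefers to avoid invoking the psh property of $\log|f|_{h_V}$, one can instead pick any single component $f_{i_0}$ with $f_{i_0}\not\equiv 0$ and use the pointwise bound $\log|f|_{h_V}\ge \log c_K+\log|f_{i_0}|$, reducing directly to the well-known scalar result $\log|f_{i_0}|\in L^1_{loc}$. The matching upper bound is immediate from continuity of $|f|_{h_V}$ on compact sets.
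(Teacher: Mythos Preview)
Your proof is correct and follows essentially the same route as the paper: both bound $|f|_{h_V}^2$ against the Euclidean norm $\sum_i|f_i|^2$ on compacta using the eigenvalues of the metric matrix (the paper writes this via an explicit unitary diagonalization, you state the eigenvalue bound directly), and then conclude via the plurisubharmonicity of $\log\sum_i|f_i|^2$. Your explicit mention of the upper bound and of the hypothesis $f\not\equiv0$ is a small improvement in presentation, but the underlying argument is the same.
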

\begin{proof} Let $v_1,\dots, v_r:U\rightarrow V$ be a holomorphic frame of $V$. Then $f$ is denoted as $f=f_1v_1+\cdots+f_rv_r$. We denote by $\hat{h}_V:U\rightarrow \Herm(r)$ the Hermitian matrix valued smooth function whose $(i,j)$ component is $h_V(v_i,v_j)$. Then $\hat{h}_V$ is diagonalized as
$^t\bar{B}\hat{h}_VB=\diag(\lambda_1,\dots,\lambda_r)$
for a unitary matrix valued function $B$ and positive functions $\lambda_1,\dots, \lambda_r$ over $U$, where we denote by $\diag(\lambda_1,\dots,\lambda_r)$ the diagonal matrix whose diagonal entries are $\lambda_1,\dots, \lambda_r$. We set
$(f_1^\prime,\dots, f_r^\prime)\coloneqq (f_1,\dots, f_r)B.$
Let $F\subseteq U$ be a compact subset of $U$. We define a positive constant $C$ as $C\coloneqq \min_{1\leq i\leq r}\{\min_{x\in F}\lambda_1(x),\dots, \min_{x\in F}\lambda_r(x)\}$. Then we have
\begin{align*}
\log|f|_{h_V}^2&=\log\{\lambda_1|f^\prime_1|^2+\cdots+\lambda_r|f^\prime_r|^2\} \\
&\geq \log\{C|f_1^\prime|^2+\cdots+C|f_r^\prime|^2\} \\
&=\log C+\log\{|f_1^\prime|^2+\cdots+|f_r^\prime|^2\} \\
&=\log C+\log\{|f_1|^2+\dots+|f_r|^2\}
\end{align*}
for each point of $F$. Since $f_1,\dots, f_r$ are holomorphic functions, $\log\{|f_1|^2+\dots+|f_r|^2\}$ is a plurisubharmonic function (see \cite{Dem1}). In particular, it is in $L^1_{loc}(U)$. This implies the claim.
\end{proof}
Then we prove Theorem \ref{main theorem 1}:
\begin{proof}[Proof of Theorem \ref{main theorem 1}] As already remarked, under the assumption of (\ref{off-diagonal vanishes}), the Hermitian-Einstein equation for a diagonal metric $h=(e^{f_1}h_1,\dots, e^{f_r}h_r)$ is equation (\ref{KW}). Then from \cite[Theorem 1]{Miy1}, equation (\ref{KW}) has a smooth solution if and only if (ii) holds. This implies the claim.
\end{proof}

\section{Relationship between condition (\ref{gamma}) and the stability condition}
We describe the relationship between condition (\ref{gamma}) and the usual stability condition of Higgs bundles by using the torus action on the space of Higgs fields. Suppose that $\gamma_1,\dots, \gamma_r$ are all rational numbers.  For each subset $I\subseteq\{1,\dots, r\}$, we define a subundle $E_I\subseteq E$ as $E_I\coloneqq \bigoplus_{i\in I}L_i$. Then the following holds:
\begin{prop}\label{main theorem 3}{\it The following are equivalent:
\begin{enumerate}[(i)] 
\item The following holds:
\begin{align}
-\gamma\in\sum_{\substack{i,j=1,\dots, r, \\ \Phi_{i,j}\neq 0}}\Q_{\geq 0}v_{i,j}. \label{Q1}
\end{align}
\item For any subset $I\in\{1,\dots, r\}$, if $E_I$ is preserved by $\Phi$, then $\deg_{\omega_X}(E_I)\leq 0$.
\end{enumerate}
In particular, if $(E,\Phi)$ is semistable, then $(E,\Phi)$ satisfies (\ref{Q1}) for any holomorphic splitting. 
}
\end{prop}

\begin{prop}\label{main theorem 4}{\it The following (i) and (ii) are equivalent:
\begin{enumerate}[(i)]
\item The following holds:
\begin{align}
-\gamma\in\sum_{\substack{i,j=1,\dots, r, \\ \Phi_{i,j}\neq 0}}\Q_{>0}v_{i,j}. \label{Q2}
\end{align}
\item There exist subsets $I_1,\dots, I_k\subseteq\{1,\dots, r\}$ such that 
\begin{itemize}
\item The holomorphic vector bundle $E$ is a direct sum of $E_{I_1},\dots, E_{I_k}$: $E=E_{I_1}\oplus\cdots\oplus E_{I_k}$ and for each $j=1,\dots, k$, $E_{I_j}$ is preserved by $\Phi$.
\item For each $j=1,\dots, k$, $\deg_{\omega_X}(E_{I_j})=0$.
\item For each $j=1,\dots, k$, if there exists a subset $I\subsetneq I_j$ and if $E_I$ is preserved by $\Phi$, then $\deg_{\omega_X}(E_I)<0$.
\end{itemize}
\end{enumerate}
In particular, if $(E,\Phi)$ is stable, then $(E,\Phi)$ satisfies (\ref{Q2}) for any holomorphic splitting.
}
\end{prop}
\begin{rem} The choice of the holomorphic splitting of $E$ is of course not unique. For example, suppose that the rank of $E$ is $2$, $E$ decomposes as $E=L_1\oplus L_2$, and there exists a non-trivial holomorphic bundle map $f:L_1\rightarrow L_2$. Then $E=L_1^\prime\oplus L_2$ is another decomposition, where $L_1^\prime$ denotes a holomorphic line bundle defined as $L_1^\prime\coloneqq \{(v_1,v_2)\in L_1\oplus L_2\mid v_2=f(v_1)\}$. 
\end{rem}
Before starting the proof of Proposition \ref{main theorem 3} and Proposition \ref{main theorem 4}, we make some preparations. Let $T$ be the maximal torus of $\SU(r)$ which consists of all diagonal matrices of $\SU(r)$. We denote by $\t$ the Lie algebra of $T$. Let $H\subseteq \SL(r,\C)$ be the complexification of $T$ with Lie algebra $\h=\t\oplus\inum\t$. We define a lattice $\h_\Z$ of $\inum\t$ as $\h_\Z\coloneqq \{\diag(n_1,\dots, n_r)\mid n_1,\dots,n_r\in\Z, n_1+\cdots +n_r=0\}$. Note that $\h_\Z$ coincides with the kernel of the exponential map $\Exp:\h\rightarrow H$. We regard the vector $\gamma$ and the vectors $v_{i,j} \ (i,j=1,\dots, r)$ as elements of $\h_\Q\coloneqq \h_\Z\otimes_\Z\Q$ by identifying the canonical basis $u_1,\dots, u_r$ with $\diag(1,\dots, 0), \dots, \diag(0,\dots, 1)$. Let $(\cdot,\cdot)$ be a bilinear form on $\h$ defined as $(u,v)\coloneqq \Tr(uv)$ for $u,v\in\h$. We denote by $\gamma^\ast\in\h_\Q^\ast$ be the dual of the vector $\gamma$ with respect to the $(\cdot, \cdot)$. We take a positive integer $n$ so that $n\gamma^\ast\in\h_\Z^\ast$. Then we define an element $\gamma^\vee\in\h_\Z^\ast$ as $\gamma^\vee\coloneqq -n\gamma^\ast$. Also, we define a character $\chi_{\gamma^\vee}:H\rightarrow \C^\ast$ as $\chi_{\gamma^\vee}(\Exp(v))\coloneqq e^{\langle v, \gamma^\vee\rangle}$ for $v\in\h$, where we denote by $\langle\cdot,\cdot\rangle$ the coupling between $\h$ and $\h^\ast$. We define an action of the algebraic torus $H$ on $H^0(\End E\otimes \bigwedge^{1,0})$ as 
$g\cdot(\theta,z)\coloneqq (\theta_0+\sum_{i,j=1,\dots,r}g_j^{-1}g_i\theta_{i,j}, \chi_{\gamma^\vee}^{-1}(g)z)$ for $(\theta,z)\in H^0(\End E\otimes \bigwedge^{1,0})\times\C$, where $g=\diag(g_1,\dots, g_r)$ is an element of $H$ and we denote by $\theta_0$ and $\theta_{i,j} \ (i,j=1,\dots, r)$ the diagonal component and the $(i,j)$-component of $\theta$, respectively. For each $i,j=1,\dots, r$, let $\alpha_{i,j}$ be the element of $\h^\ast$ defined as $\alpha_{i,j}\coloneqq \lambda_i-\lambda_j$, where $\lambda_1,\dots, \lambda_r\in\h^\ast$ is the dual basis of $u_1,\dots, u_r$. For each $s\in\inum\t$, we define a set $A_s$ as $A_s\coloneqq \{(i,j)\mid i,j\in\{1,\dots, r\}, \alpha_{i,j}(s)\geq 0\}$. Then we start the proof of Proposition \ref{main theorem 3} and Proposition \ref{main theorem 4}.
\begin{proof}[Proof of Proposition \ref{main theorem 3}]
Condition (\ref{Q1}) holds if and only if for each $z\neq 0$, the closure of $H\cdot (\Phi,z)$ does not intersects $\C\times\{0\}$ (see \cite[Proposition 3]{Miy1}). Furthermore, this is equivalent to the following:
\begin{itemize}
\item It holds that $\gamma^\vee(s)\geq 0$ for any $s\in\inum\t$ such that 
\begin{align}
\Phi\in H^0(\bigoplus_{(i,j)\in A_s}L_j^{-1}L_i\otimes\bigwedge^{1,0}). \label{Phi}
\end{align}
\end{itemize}

Let $s=\diag(s_1,\dots, s_r)$ be an element of $\inum\t$ such that (\ref{Phi}) holds. We may assume that $s_1\geq\cdots\geq s_r$. Let $I_j\subseteq\{1,\dots,r\}$ be a subset defined as $I_j\coloneqq \{1,\dots, j\}$.
From (\ref{Phi}), the Higgs field $\Phi$ preserves the sequence of subbundles 
$0\subsetneq E_{I_{j_1}}\subsetneq E_{I_{j_2}}\subseteq \cdots\subsetneq E_{I_{j_k}}\subsetneq E$,
where $1\leq j_1<\cdots <j_k< r$ are all the elements of $J_s\coloneqq \{j\in\{1,\dots, r-1\}\mid s_j>s_{j+1}\}$. Then $\gamma^\vee(s)$ can be calculated as
\begin{align}
\gamma^\vee(s)=&-n\Tr(\gamma s) \notag \\
=&-n(\gamma_1s_1+\cdots +\gamma_rs_r) \notag \\
=&-n(\gamma_1(s_1-s_2)+(\gamma_1+\gamma_2)(s_2-s_3)+\cdots \notag \\
&+(\gamma_1+\cdots+\gamma_{r-1})(s_{r-1}-s_r)) \notag \\
=&-n\sum_{j\in J_s}(s_j-s_{j+1})\deg_{\omega_X}(E_{I_j}). \label{E}
\end{align}
Therefore, it holds that $\gamma^\vee(s)\geq 0$ if and only if $\deg_{\omega_X}(E_{I_j})\leq 0$ for all $j\in J_s$. This implies the claim. 
\end{proof}
\begin{proof}[Proof of Proposition \ref{main theorem 4}]
Condition (\ref{Q2}) holds if and only if $H\cdot (\Phi,z)$ is closed for each $z\neq 0$ (see \cite[Proposition 4]{Miy1}). Furthermore, this is equivalent to that the following holds for all $s\in\inum\t$ such that (\ref{Phi}) holds:
\begin{itemize}
\item The element $s$ of $\inum\t$ satisfies $\gamma^\vee(s)\geq 0$ and if $\gamma^\vee(s)=0$, then $\Phi$ lies in 
$H^0(\bigoplus_{(i,j)\in A_s^0}L_j^{-1}L_i\otimes\bigwedge^{1,0})$, 
where $A_s^0$ is defined as $A_s^0\coloneqq \{(i,j)\mid i,j\in\{1,\dots, r\}, \alpha_{i,j}(s)=0\}$.
\end{itemize}
Then from (\ref{E}), we see that (i) and (ii) are equivalent.
\end{proof}
\begin{rem} For the proof of Proposition \ref{main theorem 3} and Proposition \ref{main theorem 4}, the author referred the definition of the stability of Higgs bundles by using parabolic subalgebras (see, for example, \cite{GGM1}). 
\end{rem}

We refer the reader to \cite{Miy2} for the relationship between the condition (\ref{Q2}) and Donaldson's functional restricted to diagonal metrics on Higgs bundles.

\medskip
\noindent
{\bf Acknowledgements.} I wish to express my gratitude to Ryushi Goto and Hisashi Kasuya for their valuable comments and helpful advice. I am very grateful to Yoshinori Hashimoto for his valuable discussions, and for asking me about the relationship between condition (\ref{gamma}) and the definition of the usual stability condition.

\noindent
E-mail address 1: natsuo.miyatake.e8@tohoku.ac.jp

\noindent
E-mail address 2: natsuo.m.math@gmail.com \\

\noindent
Mathematical Science Center for Co-creative Society, Tohoku University, 468-1 Aramaki Azaaoba, Aoba-ku, Sendai 980-0845, Japan.

\end{document}